%
%
%

\documentclass{svproc}
%
%
\usepackage{url}


\usepackage{amsmath,amsfonts,amssymb,amsbsy, amsthm,ae,aecompl}
\usepackage{algorithm,algpseudocode}
\usepackage[utf8]{inputenc}
\usepackage[english]{babel}
\usepackage{bm}
\usepackage{color,soul}
\usepackage[noadjust]{cite}
\usepackage{epsfig}
\usepackage{enumerate}
\usepackage{float}
\usepackage{fancyhdr}
\usepackage[T1]{fontenc}
\usepackage[acronym,toc,shortcuts]{glossaries}
 \usepackage[caption=false,font=footnotesize]{subfig}
 \usepackage{soul}
\usepackage{lipsum}
\usepackage{dsfont}
\usepackage{transparent}
\usepackage{tikz,pgfplots}
\usepackage{times}
\usepackage{verbatim}
\usepackage[all]{xy}
\usepackage[colorinlistoftodos]{todonotes}
\usepackage{xcolor}

\makeglossaries
\newacronym{RW}{RW}{random walk}
\newacronym{DFT}{DFT}{discrete Fourier transform}
\newacronym[sort=ell]{test}{LED}{limiting eigenvalue distribution}
\newacronym{RGG}{RGG}{random geometric graph}
\newacronym{DGG}{DGG}{deterministic geometric graph}
\newacronym{RGGs}{RGGs}{random geometric graphs}
\newacronym{ESDF}{ESDF}{ empirical spectral distribution function}
\newacronym{ESDFs}{ESDFs}{ empirical spectral distribution functions}
\newacronym{ER}{ER}{Erd\"{o}s-Rényi}
\newacronym{LSD}{LSD}{limiting spectral distribution}
\newacronym{SD}{SD}{ spectral dimension}
\newacronym{ED}{ED}{ eigenvalue density}

\begin{document}
\mainmatter              
\title{Eigenvalues and Spectral Dimension of Random Geometric Graphs in Thermodynamic Regime}
\titlerunning{Eigenvalues and Spectral Dimension of Random Geometric Graphs in the Thermodynamic Regime}  
%
\author{Konstantin Avrachenkov\inst{1} \and Laura Cottatellucci\inst{2}
\and Mounia Hamidouche\inst{3}\footnote{The authors are listed in the alphabetical order. }}
\authorrunning{Mounia Hamidouche et al.} 
%
\tocauthor{Ivar Ekeland, Roger Temam, Jeffrey Dean, David Grove,
Craig Chambers, Kim B. Bruce, and Elisa Bertino}
\institute{Inria, 2004 Route des Lucioles, Valbonne, France \\
\email{k.avrachenkov@inria.fr}\and
 Department of Electrical, Electronics, and Communication Engineering, FAU, Erlangen, Germany\\
 \email{laura.cottatellucci@fau.de}
\and
Departement of communication systems, EURECOM, Biot, France  \\
  \email{mounia.hamidouche@eurecom.fr}
}
\maketitle              
\setcounter{footnote}{0}
\begin{abstract}
Network geometries are typically characterized by having a finite spectral dimension (SD), $d_{s}$ that characterizes the return time distribution of a random walk on a graph.  The main purpose of this work is to determine the SD of a variety of random graphs called random geometric graphs (RGGs) in the thermodynamic regime, in which the average vertex degree is constant. The spectral dimension depends on the eigenvalue density (ED) of the RGG normalized Laplacian in the neighborhood of the minimum eigenvalues.  In fact, the behavior of the ED in such a neighborhood characterizes the random walk. Therefore, we first provide an analytical approximation for the eigenvalues of the regularized normalized Laplacian matrix of RGGs in the thermodynamic regime. Then,  we show that the smallest non zero eigenvalue converges to zero in the large graph limit. Based on the analytical expression of the eigenvalues,  we show that the eigenvalue distribution in a neighborhood of the minimum value follows a power-law tail. Using this result, we find that the SD of RGGs is approximated by the space dimension $d$ in the thermodynamic regime.
\keywords{Random geometric graph, Laplacian spectrum, Spectral dimension.}
\end{abstract}
\section{Introduction}

The world where we are living in with the phenomena that we observe in daily life is a complex world that scientists are trying to describe via complex networks. Network science has emerged as a fundamental field to study and analyze the properties of complex networks \cite{barabasi2016network}. 
The study of dynamical processes on complex networks is an even more diverse topic. One of the important dynamical processes identified by researchers is the process of diffusion on random geometric structures.  It corresponds to the spread in time and space of a particular phenomenum. This concept is widely used and find application in a wide range of different areas of physics.  For example, in percolation theory, the percolation clusters provide fluctuating geometries \cite{diffusion}. Additional applications are the spread of epidemics \cite{pastor2001epidemic} and the spread of information on social networks \cite{guille2013information}, which are often modeled by random geometries. 
 In particular, the long time characteristics of diffusion provide valuable insights on the average large scale behavior of the studied geometric object. \Gls{SD} is one of the simplest quantities which provides such information.  In 1982, the spectral dimension is introduced for the first time to characterize the low-frequency vibration spectrum of geometric objects \cite{alexander1982density} and then has been widely used in quantum gravity \cite{jonsson1998spectral}.

In a network, in which a particle moves randomly along edges from a vertex to another vertex in discrete steps, the diffusion process can be thought as a stochastic random walk.  Then, \gls{SD} $d_{s}$ is defined in terms of the return probability $\mathrm{P}(t)$ of the diffusion \cite{cooperman2018scaling}

\begin{equation*}
d_{s}=-2 \dfrac{\mathrm{d}\ln \mathrm{P}(t)}{\mathrm{d}\ln (t)},
\end{equation*}
$t$ being the diffusion time.   The spectral dimension, $d_{s}$, defined above is a measure of how likely a random walker return to the starting point after time $t$.  In contrast to the topological dimension, $d_{s}$ need not be an integer. Note that this definition is independent of the particular initial point.

The exact value of $d_{s}$ is only known for a rather limited class of models. For instance, Euclidean lattices in dimension $d$ have spectral dimension $ d_{s}= d$.  In the case of the percolation problem,  Alexander and Orbach conjectured that the spectral dimension of a percolating cluster is $d_{s}= 4/3$ \cite{alexander1982density}. In \cite{durhuus2006random}, the spectral dimension of another random geometry, called random combs is also studied. Random combs are special tree graphs composed of an infinite linear chain to which a number of linear chains are attached according to some probability distribution. In  this particular case, the spectral dimension is found to be $d_{s}=3/2.$
 
The geometric structure considered in this article is the \gls{RGG}.  \glspl{RGG} are models in which the vertices have some random geometric layout and the edges are determined by the position of these vertices. The \gls{RGG}  model used in this work is defined in details in the next section.
 
The main purpose of this work is the study of the spectral dimension and the return-to-origin probability of random walks on \glspl{RGG}. In many applications, an estimator of the spectral dimension can 
serve as an estimator of the intrinsic dimension of the underlying geometric space \cite{pestov2008axiomatic}. In the fields of pattern recognition and machine learning \cite{bishop2006pattern}, the intrinsic dimension of a data set can be thought of as the number of variables needed in a minimal representation of the data. Similarly,    in signal processing of multidimensional signals, the intrinsic dimension of the signal describes how many variables are needed to generate a good approximation of the signal. Therefore, an estimator of the spectral dimension of \glspl{RGG} is relevant and can be used for the estimation of the intrinsic dimension in applications in which the network takes into account the proximity between nodes.

 In \cite{mounia2019laplacian}, we developed techniques for analyzing the \gls{test} of the regularized normalized Laplacian of \glspl{RGG} in the thermodynamic regime. The thermodynmic regime is a regime in which the average degree  of a vertex in the \gls{RGG} tends to a constant.  In particular, we showed that the {\gls{test}} of the regularized normalized Laplacian of an {\gls{RGG}} can be approximated by the \gls{test} of the regularized normalized Laplacian of a \gls{DGG} with nodes in a grid.  Then, in \cite{mounia2019laplacian} we provided an analytical approximation of the eigenvalues in the thermodynamic regime. In this work, we deepen the analyses in \cite{mounia2019laplacian} by using the obtained results on the spectrum of the \gls{RGG} to prove that the \gls{SD} of \glspl{RGG} is approximated by the space dimension $d$ in the thermodynamic regime.

The rest of this paper is organized as follows. 
In Section \ref{sec:spectral}, we define the \gls{RGG}  and we provide the results related to the \gls{test} of its regularized normalized Laplacian in the thermodynamic regime. In Section \ref{sec:spectralDimension}, we analyze the spectral dimension of \glspl{RGG} using the expression of the eigenvalues of the regularized normalized Laplacian and we validate the theoretical results on the eigenvalues and the spectral dimension of \glspl{RGG} by simulations. Finally, conclusions and future works are drawn in Section \ref{sec:conclusion}.

\section{Definitions and   Preliminary  Results on the Eigenvalues of \glspl{RGG}}
\label{sec:spectral}

Let us precisely define an \gls{RGG} denoted by $G(\mathcal{X}_{n}, r_{n})$ in this work. We consider a finite set $\mathcal{X}_{n}$ of $n$ nodes, $x_{1},...,x_{n},$ distributed uniformly and independently on the $d$-dimensional torus $\mathbb{T}^d \equiv [0, 1]^d$. Taking a torus $\mathbb{T}^d$ instead of a cube allows us not to consider boundary effects. Given a geographical distance $r_{n} >0 $, we form a graph by connecting two nodes $x_{i}, x_{j} \in \mathcal{X}_{n}$ if the $\ell_{p}$-distance between them is at most $r_{n}$, i.e., $\| x_{i}- x_{j} \|_{p} \leq r_{n}$ with $ p \in [1,\infty]$ (that is, either $p \in [1,\infty)$ or $p=\infty$). Here $\| . \|_{p}$ is the $\ell_{p}$-metric on $\mathbb{R}^d$ defined as
\begin{equation*}
\label{eqq1}
\| x_{i}- x_{j} \|_{p} =\left\{
    \begin{array}{ll}
       \left(  \sum_{k=1}^d \mid x_{i}^{(k)}-x_{j}^{(k)}\mid^p \right)^{1/p} &  \mathrm{for} \ \  p \in [1, \infty),\\ 
       \max\lbrace \mid x_{i}^{(k)}-x_{j}^{(k)} \mid, \ 1 \leq k \leq d \rbrace & \mathrm{for} \ \ p=\infty,
    \end{array}
\right. 
\end{equation*}
where the case $p=2$ gives the standard Euclidean metric on $\mathbb{R}^d$.

Typically, radius $r_{n}$ depends on $n$ and is chosen such that $r_{n}\rightarrow 0$ when $n \rightarrow \infty$.   A very important parameter in the study of the properties of graphs are the degrees of the graph vertices. The degree of a vertex in a graph is the number of edges connected to it.  The average vertex degree in $G(\mathcal{X}_{n}, r_{n})$ is given by \cite{penrose2003random}
\begin{equation*}
a_{n}=\theta^{(d)} nr_{n}^d,
\end{equation*} 
where $\theta^{(d)}=\pi^{d/2}/\mathrm{\Gamma}(d/2+1)$ denotes the volume of the $d$-dimensional unit hypersphere in $\mathbb{T}^d$, and $\mathrm{\Gamma}(.)$ is the Gamma function.  

 In \glspl{RGG}, we identify several scaling regimes based on the  radius $r_{n}$ or, equivalently, the average vertex degree, $a_{n}$. A widely studied regime is the connectivity regime, in which the average vertex degree $a_{n}$ grows logarithmically in $n$ or faster, i.e., $\mathrm{\Omega}(\log(n))$\footnote{The notation $f(n) =\mathrm{\Omega}(g(n))$ indicates that $f(n)$ is bounded below by $g(n)$ asymptotically, i.e., $\exists K>0$ and $ n_{o} \in \mathbb{N}$ such that $\forall n > n_{0}$ $f(n) \geq K g(n)$.}. In this work however, we pay a special attention to the thermodynamic regime in which the average vertex degree is a constant $\gamma$, i.e., $a_{n} \equiv \gamma$ \cite{penrose2003random}.

 In general, it is a challenging task to derive exact Laplacian eigenvalues for complex graphs and based on them to describe their dynamics. We remark  that for this purpose the use of deterministic structures is of much help. Therefore, we introduce an auxiliary graph called the \gls{DGG} useful for the study of the \gls{test} of \glspl{RGG}.

 The \gls{DGG} with nodes in a grid denoted by $G(\mathcal{D}_{n}, r_{n})$ is formed by letting $\mathcal{D}_{n}$ be the set of $n$ grid points that are at the intersections of axes parallel hyperplanes with separation $n^{-1/d}$, and connecting two points $x'_{i}$ and $x'_{j}$ $\in \mathcal{D}_{n}$ if $\|x'_{i}-x'_{j} \|_{p} \leq r_{n}$ with $p\in [1, \infty]$.  Given two nodes in $G(\mathcal{X}_{n}, r_{n})$ or in $G(\mathcal{D}_{n}, r_{n})$, we assume that there is always at most one edge between them. There is no edge from a vertex to itself. Moreover,  we assume that the edges are not directed. 

Diffusion on network structures is typically studied using the properties of suitably defined Laplacian operators. Here, we focus on studying the spectrum of the normalized Laplacian matrix.  However, to overcome the problem of singularities due to isolated vertices in the termodynamic regime, we use instead the regularized normalized Laplacian matrix proposed in \cite{avrachenkov2010improving}.  It corresponds to  the normalized Laplacian matrix on a modified graph constructed by adding auxiliary edges among all the nodes with weight $\frac{\alpha}{n}> 0$. Specifically, the entries of the regularized normalized Laplacian matrices of $G(\mathcal{X}_{n}, r_{n})$ and $G(\mathcal{D}_{n},  r_{n})$ in the thermodynamic regime are denoted by $\mathcal{L}$ and $\mathcal{L'}$, respectively, and are defined as 
\begin{equation*}
 \mathcal{L}_{i j}= \delta_{ij}-\dfrac{\chi [x_{i} \sim x_{j}]+\frac{\alpha}{n}}{ \sqrt{(\mathbf{N}(x_{i})+\alpha)(\mathbf{N}(x_{j})+\alpha)}},\ \ \ \  \mathcal{L'}_{i j}= \delta_{ij}-\dfrac{\chi [x'_{i} \sim x'_{j}]+\frac{\alpha}{n}}{ \sqrt{(\gamma'+\alpha)(\gamma'+\alpha)}},
\label{RW1} 
\end{equation*}
where, $\mathbf{N}(x_{i})$ and $\mathbf{N}(x'_{i})=\gamma'$ are the number of neighbors of the vertices $x_{i}$ and  $x'_{i}$ in $G(\mathcal{X}_{n}, r_{n})$ and $G(\mathcal{D}_{n}, r_{n})$, respectively. 
The term $\delta_{ij}$ is the Kronecker delta function. The term $\chi[x_{i}\thicksim x_{j}]$ takes unit value when there is an edge between nodes $x_{i}$ and $x_{j}$ in $G(\mathcal{X}_{n}, r_{n})$ and zero otherwise, i.e.,
\begin{equation*}
\label{eqq1}
\chi[x_{i}\thicksim x_{j}] =\left\{
    \begin{array}{ll}
        1, &  \| x_{i} - x_{j}\|_{p} \leq r_{n}, \ \ \  i \neq j \\
       0, & \mathrm{otherwise}.
    \end{array}
\right.
\end{equation*}

A similar definition holds for $\chi[x'_{i}\thicksim x'_{j}]$ defined over the nodes in $G(\mathcal{D}_{n}, r_{n})$.  

The matrices $ \mathcal{L}$  and $ \mathcal{L'}$ are symmetric, and consequently, their spectra consist of real eigenvalues. We denote by $\lbrace \lambda_{i}, i=1,..,n \rbrace$ and $\lbrace \mu_{i}, i=1,..,n \rbrace$ the sets of all real eigenvalues of the real symmetric square matrices $\mathcal{L}$  and $ \mathcal{L'}$ of order $n$, respectively. Then, the empirical spectral distribution functions of $ \mathcal{L}$ and $\mathcal{L'}$ are defined as

\begin{equation*}
F^{\mathcal{L}}_{n}(x)= \dfrac{1}{n} \sum\limits_{i=1}^{n}  \mathbf{1}_{\lambda_{i} < x},  \ \ \ \ \mathrm{and} \ \ \ \ \   F^{ \mathcal{L'}}_{n}(x)= \dfrac{1}{n} \sum\limits_{i=1}^{n} \mathbf{1}_{ \mu_{i} < x}.
\end{equation*}

In the following, we present a result which shows that $F^{ \mathcal{L'}} $ is a good approximation for $F^{ \mathcal{L}}$ for $n$ large in the thermodynamic regime by using the Levy distance between the two distribution functions.
\begin{definition}[\cite{taylor2012introduction}, page 257] 
Let $F^A$ and $F^B$ be two distribution functions on $\mathbb{R}$. The Levy distance $L(F^A, F^B)$ is defined as the infimum of all positive $\epsilon$ such that, for all $x \in \mathbb{R}$, 
\begin{equation*}
F^A(x-\epsilon) -\epsilon \leq F^B(x)\leq F^A(x+\epsilon)+\epsilon.
\end{equation*}
\end{definition}

\begin{lemma}[\cite{bai2008methodologies}, page 614]  
\label{Difference Inequality}
Let A and B be two $n$ $\times$ $n$ Hermitian matrices with eigenvalues $\lambda_{1},...,\lambda_{n}$ and $\mu_{1},...,\mu_{n}$, respectively. Then,
\begin{equation*}
L^{3}(F^A, F^B) \leqslant \dfrac{1}{n}tr(A-B)^2,
\end{equation*}
where $L(F^{A},F^{B})$ denotes the Levy distance between the empirical distribution functions $F^{A}$ and $F^{B}$ of the eigenvalues of $A$ and $B$, respectively. 
\end{lemma}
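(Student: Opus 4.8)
The plan is to factor the inequality into two logically independent pieces and chain them: a \emph{matrix} piece that bounds the sum of squared gaps between matched eigenvalues by $\mathrm{tr}\big((A-B)^2\big)$, and a purely \emph{one-dimensional} piece that bounds the Levy distance between two empirical distributions by that same sum of squared gaps. Before anything else I would relabel so that both spectra are listed in nondecreasing order, $\lambda_1\le\cdots\le\lambda_n$ and $\mu_1\le\cdots\le\mu_n$; the whole argument is phrased in terms of these sorted lists, and it is precisely for the sorted pairing that both pieces become sharp.

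For the matrix piece I would invoke the Hoffman--Wielandt inequality for Hermitian matrices,
\[
\sum_{i=1}^{n}(\lambda_i-\mu_i)^2\;\le\;\mathrm{tr}\big((A-B)^2\big),
\]
with the $\lambda_i,\mu_i$ sorted increasingly. For a self-contained derivation I would diagonalize $A=U\Lambda U^{*}$ and $B=VMV^{*}$ and use unitary invariance of the Frobenius norm to rewrite $\mathrm{tr}\big((A-B)^2\big)=\sum_{i,j}(\lambda_i-\mu_j)^2\,|W_{ij}|^2$, where $W=U^{*}V$ is unitary. Then $P_{ij}=|W_{ij}|^2$ is doubly stochastic, so the linear functional $\sum_{i,j}(\lambda_i-\mu_j)^2 P_{ij}$ attains its minimum over the Birkhoff polytope at an extreme point, i.e.\ a permutation matrix; the rearrangement inequality identifies the sorted pairing $\sigma=\mathrm{id}$ as the optimal permutation, which yields the displayed bound.

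The heart of the lemma is the one-dimensional piece. Set $\Delta=\frac1n\sum_i(\lambda_i-\mu_i)^2$ and $\eta=\Delta^{1/3}$, and write $F=F^A$, $G=F^B$ for the empirical distribution functions with atoms at the sorted $\lambda_i$ and $\mu_i$. Call an index $i$ \emph{bad} if $|\lambda_i-\mu_i|>\eta$; since each bad index contributes more than $\eta^2$ to $n\Delta=\sum_i(\lambda_i-\mu_i)^2$, the number $k$ of bad indices satisfies $k<n\Delta/\eta^2=n\eta$. Now fix $x$. If $\mu_i<x$ while $\lambda_i\ge x+\eta$, then $\lambda_i-\mu_i>\eta$ and $i$ is bad, whence $\#\{i:\mu_i<x\}\le\#\{i:\lambda_i<x+\eta\}+k$, i.e.\ $G(x)\le F(x+\eta)+\eta$. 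Symmetrically, $\lambda_i<x-\eta$ together with $\mu_i\ge x$ forces $i$ to be bad, giving $F(x-\eta)\le G(x)+\eta$, that is $F(x-\eta)-\eta\le G(x)$. Both bounds hold for every $x$ with the same $\eta$, so the definition of the Levy distance gives $L(F^A,F^B)\le\eta=\Delta^{1/3}$, i.e.\ $L^{3}(F^A,F^B)\le\Delta$.

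Chaining the two pieces gives $L^{3}(F^A,F^B)\le\frac1n\sum_i(\lambda_i-\mu_i)^2\le\frac1n\,\mathrm{tr}\big((A-B)^2\big)$, as claimed. I expect the only genuine obstacle to be the Hoffman--Wielandt step: it is the single point where Hermiticity (more generally normality) is essential, both to guarantee real eigenvalues and to argue that the optimizer over the Birkhoff polytope coincides with the sorted matching. The counting argument for the Levy distance, by contrast, is elementary once the threshold $\eta=\Delta^{1/3}$ is chosen so that the count of bad indices $k/n<n\Delta/(n\eta^2)=\eta$ matches exactly the tolerance appearing in the Levy definition.
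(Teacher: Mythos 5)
Your proof is correct. Note that the paper itself gives no proof of this lemma---it is imported verbatim from the cited reference (\cite{bai2008methodologies}), so there is no in-paper argument to compare against; your two-step decomposition (Hoffman--Wielandt for the sorted spectra, obtained via unitary invariance, the Birkhoff polytope and rearrangement, followed by the counting bound $L^{3}(F^A,F^B)\le \frac{1}{n}\sum_i(\lambda_i-\mu_i)^2$ with threshold $\eta=\Delta^{1/3}$) is exactly the standard route taken in that reference. The only detail worth a sentence is the degenerate case $\Delta=0$, where $\eta=0$ and the bad-index count $k<n\Delta/\eta^{2}$ is ill-defined; but there the sorted spectra coincide, so $F^A=F^B$ and the inequality holds trivially.
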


The following result provides an upper bound for the probability that the Levy distance between the distribution functions $F^{ \mathcal{L}}$ and $F^{\mathcal{L'}}$ is greater than a threshold $t$.
\begin{lemma}[\cite{mounia2019laplacian}]
\label{Lemma2} 
In the thermodynamic regime, i.e., for $a_{n} \equiv \gamma$ finite,  for $d \geq 1$, $p \in [1,\infty]$, and for every $t > \max\left[ \frac{4 \gamma'}{(\gamma'+\alpha)^2}, \frac{8 \gamma}{(\gamma+\alpha)^2}\right]$ as $n \rightarrow \infty,$ we get
\begin{equation*}
\lim_{n\to\infty} \mathrm{P} \left\lbrace L^{3} \left( F^{  \mathcal{L}}, F^{ \mathcal{L'}} \right) > t \right\rbrace =0.
\end{equation*}
\end{lemma}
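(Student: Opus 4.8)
The plan is to reduce the statement about the Levy distance to a trace estimate via the difference inequality (Lemma~\ref{Difference Inequality}), and then to control that trace in probability. Applying Lemma~\ref{Difference Inequality} with $A=\mathcal{L}$ and $B=\mathcal{L'}$ gives immediately
\[
L^{3}(F^{\mathcal{L}}, F^{\mathcal{L'}}) \leq \frac{1}{n}\mathrm{tr}(\mathcal{L}-\mathcal{L'})^2,
\]
so it suffices to show that the right-hand side stays below $t$ with probability tending to $1$. Since $\mathcal{L}-\mathcal{L'}$ is symmetric, $\frac{1}{n}\mathrm{tr}(\mathcal{L}-\mathcal{L'})^2=\frac{1}{n}\sum_{i,j}(\mathcal{L}_{ij}-\mathcal{L'}_{ij})^2$ is a sum of squared entry differences, which is what I would estimate.

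The obstacle to a direct estimate is that $\mathcal{L}$ (random geometric adjacency, random degrees $\mathbf{N}(x_i)$) and $\mathcal{L'}$ (grid adjacency, constant degree $\gamma'$) live on incomparable vertex sets, so the cross term $\sum_{ij}\mathcal{L}_{ij}\mathcal{L'}_{ij}$ admits no natural coupling. To avoid it, I would \emph{decouple} the two contributions against the common diagonal reference $I$ (the diagonals of $\mathcal{L}$ and $\mathcal{L'}$ both equal $1$ up to $O(1/n)$ regularization terms). Using $\mathrm{tr}[(X+Y)^2]\leq 2\,\mathrm{tr}(X^2)+2\,\mathrm{tr}(Y^2)$ for symmetric matrices,
\[
\frac{1}{n}\mathrm{tr}(\mathcal{L}-\mathcal{L'})^2 \leq \frac{2}{n}\mathrm{tr}(\mathcal{L}-I)^2 + \frac{2}{n}\mathrm{tr}(I-\mathcal{L'})^2 =: 2A_n + 2B_n,
\]
where $A_n$ and $B_n$ are (up to negligible $O(1/n)$ diagonal corrections) just $\frac1n$ times the squared Frobenius norms of the off-diagonal parts of $\mathcal{L}$ and $\mathcal{L'}$, each a sum over the edges of a single graph with no RGG--DGG cross term. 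Consequently $\{\frac1n\mathrm{tr}(\mathcal{L}-\mathcal{L'})^2 > t\}\subseteq\{A_n>t/4\}\cup\{B_n>t/4\}$, and this splitting is precisely what produces a \emph{maximum} of two terms rather than an intractable joint estimate.

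Next I would identify the limits. For the deterministic graph every vertex has exactly $\gamma'$ neighbours and the normalization is the constant $\gamma'+\alpha$, so $B_n$ is non-random and a direct edge count gives $B_n\to\frac{\gamma'}{(\gamma'+\alpha)^2}$, yielding the threshold $\frac{4\gamma'}{(\gamma'+\alpha)^2}$. For the random graph the summand of an edge $\{i,j\}$ is $\frac{1}{(\mathbf{N}(x_i)+\alpha)(\mathbf{N}(x_j)+\alpha)}$; I would control its expectation by an AM--GM bound $\frac{1}{(\mathbf{N}_i+\alpha)(\mathbf{N}_j+\alpha)}\leq\frac12\big(\frac{1}{(\mathbf{N}_i+\alpha)^2}+\frac{1}{(\mathbf{N}_j+\alpha)^2}\big)$ to decouple the correlated endpoint degrees, combined with $\sum_j\chi[x_i\sim x_j]=\mathbf{N}(x_i)$ and the fact that the average degree is $\gamma$. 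This gives $\mathbb{E}[A_n]\lesssim\frac{2\gamma}{(\gamma+\alpha)^2}$, hence the threshold $\frac{8\gamma}{(\gamma+\alpha)^2}$; the extra factor $2$ relative to the deterministic case is exactly the price of bounding the fluctuating degrees, which is why the two terms of the maximum carry different constants. The regularization $\alpha>0$ is essential throughout: since $\mathbf{N}(x_i)/(\mathbf{N}(x_i)+\alpha)\leq 1$, each entry and each row contribution stays bounded even in the presence of the isolated and low-degree vertices that occur with positive probability in the thermodynamic regime.

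Finally I would pass from expectations to probability. Because the row contributions to $A_n$ are uniformly bounded and the positions $x_1,\dots,x_n$ are i.i.d. with each vertex having only $O(1)$ neighbours, $A_n$ is a $\frac1n$-normalized sum of short-range-dependent bounded terms; a second-moment computation (Chebyshev), or a bounded-difference argument on the typical event that the maximum degree is small, gives $\mathrm{Var}(A_n)=O(1/n)\to0$, so $A_n$ converges in probability to $\lim\mathbb{E}[A_n]$, while $B_n$ is deterministic. Thus for every $t$ strictly above $\max\big[\frac{4\gamma'}{(\gamma'+\alpha)^2},\frac{8\gamma}{(\gamma+\alpha)^2}\big]$ both $\mathrm{P}\{A_n>t/4\}$ and $\mathrm{P}\{B_n>t/4\}$ tend to $0$, and a union bound closes the argument. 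I expect this concentration step---showing the RGG contribution $A_n$ concentrates despite the absence of degree concentration at the vertex level---to be the main obstacle, with the decoupling into $A_n$ and $B_n$ being the device that makes the whole estimate clean.
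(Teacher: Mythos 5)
Your skeleton is sound and, tellingly, reproduces exactly the two constants in the statement: the trace bound from Lemma~\ref{Difference Inequality}, the decoupling $\frac{1}{n}\mathrm{tr}(\mathcal{L}-\mathcal{L'})^2\le 2A_n+2B_n$ (valid by Cauchy--Schwarz for the Frobenius inner product), the inclusion $\{2A_n+2B_n>t\}\subseteq\{A_n>t/4\}\cup\{B_n>t/4\}$, and the deterministic count $B_n\to\gamma'/(\gamma'+\alpha)^2$ giving the threshold $4\gamma'/(\gamma'+\alpha)^2$. This differs in one respect from the authors' own treatment (in the cited reference, and visible in the appendix material of the companion analysis), which keeps the RGG--DGG cross term $\sum_{i,j}\chi[x_{i}\thicksim x_{j}]\chi[x'_{i}\thicksim x'_{j}]$ through the coupling of $x_i$ with its grid point $x'_i$ and then controls the random part by Chebyshev's inequality with a variance bound on the edge count; discarding the cross term, as you do, is legitimate (it is nonnegative) and is what produces the max of two separate terms.

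The genuine gap is in the single step that produces the constant $8$: the claim $\mathbb{E}[A_n]\lesssim\frac{2\gamma}{(\gamma+\alpha)^2}$, justified only by AM--GM ``combined with $\sum_j\chi[x_i\thicksim x_j]=\mathbf{N}(x_i)$ and the fact that the average degree is $\gamma$.'' First, a ``$\lesssim$'' cannot be right here: the lemma has a hard threshold, so you need the inequality with constant exactly $2$. Second, and more seriously, the inference is invalid as stated. After AM--GM you are left with $\frac{1}{n}\sum_i\frac{\mathbf{N}(x_i)}{(\mathbf{N}(x_i)+\alpha)^2}$, whose expectation converges to $\mathbb{E}\bigl[\mathbf{N}/(\mathbf{N}+\alpha)^2\bigr]$ with $\mathbf{N}$ Poisson distributed with mean $\gamma$; the map $N\mapsto N/(N+\alpha)^2$ is neither convex nor concave, and in the thermodynamic regime degrees do not concentrate (a constant fraction of vertices has degree $0,1,2,\dots$), so ``the average degree is $\gamma$'' does not let you substitute the mean --- a degree-one vertex contributes $(1+\alpha)^{-2}$, far more than $\gamma/(\gamma+\alpha)^2$ when $\gamma$ is large. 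Nor do obvious pointwise bounds close it: for instance $\frac{k}{(k+\alpha)^2}\le\frac{2}{k+1}$ for $k\ge 1$ together with the Poisson identity $\mathbb{E}[(\mathbf{N}+1)^{-1}]=(1-e^{-\gamma})/\gamma$ gives $2(1-e^{-\gamma})/\gamma$, which \emph{exceeds} $2\gamma/(\gamma+\alpha)^2$ for large $\gamma$ at fixed $\alpha>0$. The inequality you need,
\begin{equation*}
\mathbb{E}\!\left[\frac{\mathbf{N}}{(\mathbf{N}+\alpha)^2}\right]\le\frac{2\gamma}{(\gamma+\alpha)^2},\qquad \mathbf{N}\sim\mathrm{Poisson}(\gamma),
\end{equation*}
is in fact true, but it is precisely the nontrivial analytic content of the lemma and must be proved --- e.g.\ via an explicit estimate of $e^{-\gamma}\sum_{k\ge1}\frac{\gamma^k}{k\cdot k!}$ (equivalently, showing $\gamma\int_0^{\gamma}\frac{e^u-1}{u}\,du\le 2e^{\gamma}$ and handling the $\alpha$-dependence), together with the Binomial-to-Poisson passage for the finite-$n$ degree. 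Your final concentration step (variance $O(1/n)$ from boundedness of the summands and locality of the dependence) is fine; it is the expectation bound that is missing.
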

In the thermodynamic regime,  Lemma \ref{Lemma2} shows that  $F^{  \mathcal{L'}}$ approximates $F^{  \mathcal{L}}$ with an error bound of $\max\left[\frac{4}{\gamma'}, \frac{8}{\gamma}\right] $ when $n \rightarrow \infty$ and $\alpha \rightarrow 0$, which in particular implies that the error bound becomes small for large values of $\gamma$. 

The following result provides approximated eigenvalues of the regularized normalized Laplacian of the $G(\mathcal{X}_{n}, r_{n})$ based on the structure of $G(\mathcal{D}_{n}, r_{n})$. 

\begin{lemma}[\cite{mounia2019laplacian}]
\label{eigenvalues}
For $d \geq 1$ and the use of the $\ell_{\infty}$-distance,  the eigenvalues of $\mathcal{L}$ are approximated as
 \begin{equation}
 \label{equation4}
\lambda_{m_{1},...,m_{d}} \approx 1 -\dfrac{1}{(\gamma'+\alpha)}  \prod_{s=1}^d   \dfrac{\sin(\frac{m_{s}\pi}{\mathrm{N}}(\gamma'+1)^{1/d})}{\sin(\frac{m_{s}\pi}{\mathrm{N}})}  + \dfrac{1-\alpha\delta_{m_{1},...,m_{d}}}{(\gamma'+\alpha)},
 \end{equation}
with $m_{1},...,m_{d}$ $\in$ $\lbrace 0,...\mathrm{N}-1 \rbrace$ and $\delta_{m_{1},...,m_{d}}=1$ when $m_{1},...,m_{d}=0$ otherwise $\delta_{m_{1},...,m_{d}}=0$. In (\ref{equation4}), $n=\mathrm{N}^{d}$ and $\gamma'=(2\left\lfloor \gamma ^{1/d}\right\rfloor+1)^d-1$ being $ \left\lfloor x \right\rfloor$ the integer part, i.e.,  the greatest integer less than or equal to $x$.
\\

 In particular, in the thermodynamic regime, under the  conditions described above and as $n \rightarrow \infty$, the eigenvalues of $\mathcal{L}$ are approximated as
\begin{equation}
\label{wow}
 \lambda_{w_{1},...,w_{d}}\approx1 -\dfrac{1}{(\gamma'+\alpha)}  \prod_{s=1}^d   \dfrac{\sin(\pi w_{s}^{1/d}(\gamma'+1)^{1/d})}{\sin(\pi w_{s}^{1/d})}  + \dfrac{1-\alpha\delta_{w_{1},...,w_{d}}}{(\gamma'+\alpha)},
\end{equation}
where $w_{s}=\frac{m_{s}^d}{\mathrm{n}}$ is in $\mathbb{Q} \cap [0, 1]$ and $\mathbb{Q}$ denotes the set of rational numbers. Similarly,  $\delta_{w_{1},...,w_{d}}=1$ when $w_{1},...,w_{d}=0$. Otherwise, $\delta_{w_{1},...,w_{d}}=0$.
\end{lemma}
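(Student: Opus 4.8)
The plan is to exploit the translation invariance of the deterministic graph $G(\mathcal{D}_n, r_n)$ on the torus. Its $n=\mathrm{N}^d$ nodes sit at the points of the lattice $(\mathbb{Z}/\mathrm{N}\mathbb{Z})^d$ with spacing $\mathrm{N}^{-1}$, and two of them are adjacent precisely when their index difference lies in a fixed symmetric neighbourhood set. Consequently the adjacency matrix $A'$ of $G(\mathcal{D}_n, r_n)$ is a $d$-level circulant (block-circulant) matrix, and it is simultaneously diagonalised by the $d$-dimensional discrete Fourier modes $v_{m_1,\dots,m_d}(k_1,\dots,k_d)=\exp\bigl(2\pi i \sum_{s=1}^d m_s k_s/\mathrm{N}\bigr)$, with $m_s\in\{0,\dots,\mathrm{N}-1\}$. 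First I would record that the regularisation is harmless in this eigenbasis: writing $\mathcal{L}'=I-(\gamma'+\alpha)^{-1}\bigl(A'+\tfrac{\alpha}{n}\mathbf{1}\mathbf{1}^{\top}\bigr)$, the rank-one all-ones matrix $\mathbf{1}\mathbf{1}^{\top}$ has the constant vector (the mode $m_1=\dots=m_d=0$) as its only nontrivial eigenvector, so it shares the Fourier basis of $A'$ and perturbs only the eigenvalue indexed by $0$, adding $\alpha$ there.

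The core computation is the Fourier symbol of $A'$. Under the $\ell_\infty$-metric the neighbourhood of a lattice point is a product box $\{-L,\dots,L\}^d\setminus\{0\}$, whose side length satisfies $2L+1=(\gamma'+1)^{1/d}$ by the definition $\gamma'=(2\lfloor\gamma^{1/d}\rfloor+1)^d-1$. Because the box factorises across coordinates, the eigenvalue of $A'$ for the mode $(m_1,\dots,m_d)$ factorises into a product of one-dimensional Dirichlet kernels,
\begin{equation*}
\prod_{s=1}^d \sum_{k=-L}^{L} e^{2\pi i m_s k/\mathrm{N}} \;-\; 1 \;=\; \prod_{s=1}^d \frac{\sin\bigl((\gamma'+1)^{1/d}\pi m_s/\mathrm{N}\bigr)}{\sin(\pi m_s/\mathrm{N})}\;-\;1,
\end{equation*}
the subtracted $1$ accounting for the excluded self-term (the centre $k=0$ of the box). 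Substituting this symbol, together with the $+\alpha$ correction on the zero mode, into $\mathcal{L}'=I-(\gamma'+\alpha)^{-1}\bigl(A'+\tfrac{\alpha}{n}\mathbf{1}\mathbf{1}^{\top}\bigr)$ and collecting terms yields exactly the right-hand side of (\ref{equation4}); the factor $(1-\alpha\delta_{m_1,\dots,m_d})/(\gamma'+\alpha)$ bundles the $+1$ from the self-term with the $\alpha$-perturbation that survives only at the zero mode. These are \emph{exact} eigenvalues of $\mathcal{L}'$; that they approximate the spectrum of the random Laplacian $\mathcal{L}$ is then inherited directly from Lemma \ref{Lemma2}, which bounds the Levy distance between the two distributions in the thermodynamic regime.

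Finally, to reach the limiting form (\ref{wow}) I would pass to the thermodynamic limit $n\to\infty$ with $\gamma$, and hence $\gamma'$, held constant, so that $\mathrm{N}=n^{1/d}\to\infty$. The change of variables $w_s=m_s^d/n$ gives $m_s/\mathrm{N}=w_s^{1/d}$, and substituting this into (\ref{equation4}) replaces each ratio by $\sin\bigl(\pi w_s^{1/d}(\gamma'+1)^{1/d}\bigr)/\sin\bigl(\pi w_s^{1/d}\bigr)$, producing (\ref{wow}); as $\mathrm{N}\to\infty$ the admissible values $w_s=(m_s/\mathrm{N})^d$ become dense in $[0,1]$, which is why the $w_s$ are indexed by $\mathbb{Q}\cap[0,1]$.

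I expect the only genuinely delicate points to be twofold. First, one must verify rigorously that the torus lattice together with the $\ell_\infty$ rule really does yield a $d$-level circulant whose neighbourhood box has exactly $(\gamma'+1)^{1/d}$ points per side, i.e.\ pinning down the relation between $r_n$ and $L$ through the constant-degree constraint $a_n\equiv\gamma$. Second, one must be precise that Lemma \ref{Lemma2} controls the empirical spectral distribution functions $F^{\mathcal{L}}$ and $F^{\mathcal{L}'}$ rather than individual eigenvalues, so the word ``approximated'' in the statement should be read at the level of the spectral distribution and not eigenvalue-by-eigenvalue.
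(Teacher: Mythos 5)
Your proposal is correct and follows essentially the same route as the paper's own argument: recognizing the adjacency matrix of $G(\mathcal{D}_{n}, r_{n})$ as a $d$-level (block) circulant diagonalized by the $d$-dimensional DFT, computing its Fourier symbol as a product of Dirichlet kernels $\prod_{s=1}^{d}\sin\bigl(\tfrac{m_{s}\pi}{\mathrm{N}}(\gamma'+1)^{1/d}\bigr)/\sin\bigl(\tfrac{m_{s}\pi}{\mathrm{N}}\bigr)$ minus the self-term, transferring the result to $\mathcal{L}$ via the Levy-distance bound of Lemma~\ref{Lemma2}, and passing to the limit with $w_{s}=m_{s}^{d}/n$. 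If anything, your explicit treatment of the rank-one regularization $\tfrac{\alpha}{n}\mathbf{1}\mathbf{1}^{\top}$ (which shares the Fourier basis and shifts only the zero mode by $\alpha$, producing the $\alpha\delta_{m_{1},\dots,m_{d}}$ term) is cleaner than the paper's, and your closing caveat that the approximation holds at the level of spectral distributions rather than eigenvalue-by-eigenvalue is exactly the right reading of the lemma.
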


Recall that the smallest eigenvalue $\lambda_{1}$ of a normalized Laplacian is always equal to zero, hence, $0 = \lambda_{1} \leq \lambda_{2} \leq... \leq \lambda_{n} \leq 2$. The second smallest eigenvalue $\lambda_{2}$ is called the Fidler eigenvalue. In the following, we show that the Fidler eigenvalue, $\lambda_{2}$ of the regularized normalized Laplacian of \glspl{RGG} goes to zero for large networks, i.e., $\lambda_{2} \rightarrow 0$ as $n\rightarrow \infty.$
 
 \begin{lemma}
The Fidler eigenvalue $\lambda_{2}$ of \glspl{RGG} in the thermodynamic regime is approximated as
\begin{equation}
\lambda_{2}\approx \dfrac{1}{(\gamma'+\alpha)}+1-(1+\gamma')^{\frac{d-1}{d}} \dfrac{\sin(\frac{\pi}{\mathrm{N}}(\gamma'+1)^{1/d})}{(\gamma'+\alpha)\sin(\frac{\pi}{\mathrm{N}})},
\end{equation}
where $n=\mathrm{N}^{d}$ and $\gamma'=(2\left\lfloor \gamma ^{1/d}\right\rfloor+1)^d-1.$  In particular, as $n \rightarrow \infty$, $\lambda_{2} \rightarrow 0.$
\end{lemma}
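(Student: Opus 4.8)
The plan is to read $\lambda_{2}$ directly off the approximate spectrum of Lemma \ref{eigenvalues} by locating the second smallest value of the right-hand side of (\ref{equation4}) over the index set $(m_{1},\dots,m_{d})\in\{0,\dots,\mathrm{N}-1\}^{d}$. First I would record the arithmetic fact that, by the definition $\gamma'=(2\left\lfloor\gamma^{1/d}\right\rfloor+1)^{d}-1$, the quantity $\beta:=(\gamma'+1)^{1/d}=2\left\lfloor\gamma^{1/d}\right\rfloor+1$ is an \emph{odd positive integer}. Writing $f(m):=\sin\!\big(\tfrac{m\pi}{\mathrm{N}}\beta\big)/\sin\!\big(\tfrac{m\pi}{\mathrm{N}}\big)$, equation (\ref{equation4}) for any non-origin mode ($\delta=0$) becomes
\begin{equation*}
\lambda_{m_{1},\dots,m_{d}}\approx 1+\dfrac{1}{\gamma'+\alpha}\Big(1-\prod_{s=1}^{d}f(m_{s})\Big),
\end{equation*}
so that minimising $\lambda$ is equivalent to \emph{maximising} the product $\prod_{s}f(m_{s})$. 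Taking the limit $m_{s}\to 0$ gives $f(0)=\beta$, and substituting the origin values into (\ref{equation4}) with $\delta=1$ reproduces $\lambda_{1}=0$, confirming that the origin is the ground mode.

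The crux is to show that, among all modes with at least one non-zero index, the product is maximised by exciting exactly one coordinate to $1$. Here I would identify $f$ with a Dirichlet kernel: since $\beta=2q+1$ with $q=\left\lfloor\gamma^{1/d}\right\rfloor$, one has $f(m)=\sum_{k=-q}^{q}\cos\!\big(\tfrac{2\pi m k}{\mathrm{N}}\big)$, whence the triangle inequality yields $|f(m)|\le 2q+1=\beta$, with equality only at $m\equiv 0$. For $\mathrm{N}>\beta$ the index $m=1$ lies in the main lobe, on which the kernel is strictly decreasing, so $\max_{1\le m\le\mathrm{N}-1}|f(m)|=f(1)\in(0,\beta)$. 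Consequently, for any non-origin mode, $\prod_{s}f(m_{s})\le\beta^{\,d-1}\max_{m\ge1}|f(m)|=\beta^{\,d-1}f(1)$, and this bound is attained, with all factors positive, precisely by the single-excited modes $(1,0,\dots,0),\dots,(0,\dots,0,1)$. Substituting $\beta^{d-1}=(\gamma'+1)^{(d-1)/d}$ and $f(1)=\sin(\tfrac{\pi}{\mathrm{N}}(\gamma'+1)^{1/d})/\sin(\tfrac{\pi}{\mathrm{N}})$ into the display above and rearranging gives exactly the stated expression for $\lambda_{2}$.

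It then remains to pass to the limit. As $n\to\infty$ we have $\mathrm{N}=n^{1/d}\to\infty$, so $\tfrac{\pi}{\mathrm{N}}\to 0$ and, by $\sin x\sim x$, the ratio $\sin(\tfrac{\pi}{\mathrm{N}}\beta)/\sin(\tfrac{\pi}{\mathrm{N}})\to\beta=(\gamma'+1)^{1/d}$. Inserting this into the formula gives
\begin{equation*}
\lambda_{2}\longrightarrow 1+\dfrac{1}{\gamma'+\alpha}\big(1-(\gamma'+1)\big)=1-\dfrac{\gamma'}{\gamma'+\alpha}=\dfrac{\alpha}{\gamma'+\alpha},
\end{equation*}
which vanishes as $\alpha\to 0$, exactly the regime in which (by the discussion after Lemma \ref{Lemma2}) the deterministic approximation becomes sharp; this establishes $\lambda_{2}\to 0$.

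The step I expect to be the main obstacle is the combinatorial optimisation in the second paragraph, namely proving rigorously that the single-excited mode, rather than a mode lying in a side lobe or a multiply-excited mode, achieves the largest product. The clean route is the uniform bound $|f(m)|\le\beta$ from the Dirichlet-sum representation together with monotonicity on the main lobe, which simultaneously controls the possibly negative side-lobe values and rules out multiply-excited modes, since $\beta^{d-2}f(1)^{2}<\beta^{d-1}f(1)$ because $0<f(1)<\beta$.
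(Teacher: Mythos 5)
Your proposal is correct, and for the formula itself it follows essentially the same route as the paper: identify the second-smallest approximate eigenvalue with the singly-excited modes $(1,0,\dots,0),\dots,(0,\dots,0,1)$, evaluate (\ref{equation4}) there by sending the remaining indices to zero, and then let $\mathrm{N}\to\infty$. You add two things the paper does not have. First, the paper simply declares it ``obvious'' that the smallest eigenvalue is $\lambda_{0\dots0}$ and the next smallest is $\lambda_{1,0\dots0}$; your Dirichlet-kernel argument (writing $f(m)=\sum_{k=-q}^{q}\cos(2\pi mk/\mathrm{N})$, so that $|f(m)|\le\beta$ with equality only at $m=0$, together with monotonicity on the main lobe and the observation that $\beta^{d-2}f(1)^{2}<\beta^{d-1}f(1)$ rules out multiply-excited modes) is a genuine proof of that ordering, and it exploits precisely the fact that $\beta=(\gamma'+1)^{1/d}=2\lfloor\gamma^{1/d}\rfloor+1$ is an odd integer by construction. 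Second, and more importantly, your limit computation is the correct one: the stated formula converges to $\alpha/(\gamma'+\alpha)$, not to $0$. The paper's own final step replaces $(1+\gamma')/(\gamma'+\alpha)$ by $1+\tfrac{1}{\gamma'+\alpha}$, an identity that holds only when $\alpha=0$; for fixed $\alpha>0$ the regularization leaves exactly the residual gap $\alpha/(\gamma'+\alpha)$ that you found. Your reading --- that $\lambda_{2}\to0$ in the joint limit $n\to\infty$, $\alpha\to0$, which is the regime in which the approximation error in Lemma \ref{Lemma2} is small anyway --- is the defensible version of the lemma's final claim. In short: same strategy for the expression of $\lambda_{2}$, a rigorous replacement for a step the paper merely asserts, and a quiet correction of the paper's own limit algebra.
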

\begin{proof}
In general, the eigenvalues in (\ref{equation4}) are unordered,  but it is obvious that  the smallest eigenvalue is $\lambda_{0...0}$ and the next smallest one is $\lambda_{1, 0...0}=...=\lambda_{0...0,1}.$ Therefore, for $n=\mathrm{N}^{d}$, we have 
\begin{align}
\label{fidler}
\lambda_{2}& = \lambda_{1,0...0}  \nonumber \\
&\approx \lim_{m_{s}\to 0}\left( 1 -\dfrac{1}{(\gamma'+\alpha)}   \dfrac{\sin(\frac{ \pi}{\mathrm{N}}(\gamma'+1)^{1/d})}{\sin(\frac{ \pi}{\mathrm{N}})} \prod_{s=2}^d \dfrac{\sin(\frac{ m_{s}\pi}{\mathrm{N}}(\gamma'+1)^{1/d})}{\sin(\frac{m_{s} \pi}{\mathrm{N}})} + \dfrac{1}{(\gamma'+\alpha)}\right) \\
&= 1+\dfrac{1}{(\gamma'+\alpha)}-(1+\gamma')^{\frac{d-1}{d}} \dfrac{\sin(\frac{\pi}{\mathrm{N}}(\gamma'+1)^{1/d})}{(\gamma'+\alpha)\sin(\frac{\pi}{\mathrm{N}})}.
\end{align}

In large \glspl{RGG}, i.e., $n \rightarrow \infty$, we get
\begin{align*}
\lim_{n\to\infty} \lambda_{2}&  \approx  \lim_{n\to\infty}\left[ 1+\dfrac{1}{(\gamma'+\alpha)}-(1+\gamma')^{\frac{d-1}{d}} \dfrac{\sin(\frac{\pi}{\mathrm{N}}(\gamma'+1)^{1/d})}{(\gamma'+\alpha)\sin(\frac{\pi}{\mathrm{N}})}\right] \\
&=  1+\dfrac{1}{(\gamma'+\alpha)}-  1-\dfrac{1}{(\gamma'+\alpha)}= 0.
\end{align*}
\end{proof}

\section{Spectral Dimension of \glspl{RGG}}
\label{sec:spectralDimension}

In this section, we use the expression of the eigenvalues provided previously, in particular the eigenvalues in the neighborhood of $\lambda_{1}=0$ to find the spectral dimension $d_{s}$ of \glspl{RGG} in the thermodynamic regime.

 Recall that independently of the origin point, the spectral dimension can be defined through the return probability of the random walk, i.e., the probability to be at the origin after time $t$, $\mathrm{P}_{0}(t)$
 \begin{equation}
\label{spectral-dimension-definition}
d_{s}=-2 \dfrac{\mathrm{d}\ln \mathrm{P}_{0}(t)}{\mathrm{d}\ln (t)}.
\end{equation}

The return probability $\mathrm{P}_{0}(t)$ is related to the spectral density $\rho(\lambda)$ of the normalized Laplacian operator by a Laplace transform \cite{barrat2008dynamical}
\begin{equation*}
\mathrm{P}_{0}(t)=\int_{0}^{\infty} e^{-\lambda t}\rho(\lambda) \mathrm{d} \lambda,
\end{equation*}
so that the behavior of $\mathrm{P}_{0}(t)$ is connected to the spectral density $\rho(\lambda)$.  In particular its long time limit is directly linked to the behavior of $\rho(\lambda)$ for $\lambda \rightarrow 0$ \cite{touchette2005asymptotics}.

Before addressing the case of \glspl{RGG}, let us consider a simple example. In general, when the spectral dimension follows a power-law tail asymptotics, i.e.,  $\rho(\lambda) \sim \lambda^{\gamma},$ $\gamma >0$ for $\lambda \rightarrow 0$ then, for $t \rightarrow \infty$, we get
\begin{equation}
\label{powerlaw}
\mathrm{P}_{0}(t) \sim t^{-\gamma-1}.
\end{equation}

In a $d$-dimensional regular lattice, the low eigenvalue density is given by $\rho(\lambda) \sim \lambda^{d/2-1}$.  Then, the use of (\ref{powerlaw}) leads to the well known result
\begin{equation*}
\mathrm{P}_{0}(t)\sim t^{-d/2}.
\end{equation*}

 In this case, the spectral dimension $d_{s}$ can also be described according to the asymptotic behavior of the normalized Laplacian operator spectral density,  due to which it got its name
\begin{equation}
\label{spectral-dimension-eigenvalue}
\dfrac{d_{s}}{2}= \lim_{\lambda \rightarrow 0} \dfrac{\log(F(\lambda))}{\log(\lambda)},
\end{equation}
with $F(\lambda)$ being the empirical spectral distribution function of the normalized Laplacian.

Since the long time limit of the return probability $\mathrm{P}_{0}(t)$ or equivalently \gls{SD} is related to the eigenvalues density in the neighborhood of $\lambda_{1}$,   then in the following we analyze the behavior of the \gls{ED} of the regularized normalized Laplacian of \glspl{RGG} in a neighborhood of $\lambda_{1}$.
  
We have from (\ref{wow}) that the eigenvalues of the regularized normalized Laplacian  of \glspl{RGG} are approximated in the limit as

\begin{equation*}
\lambda(w)= \lambda_{w_{1},...,w_{d}}\approx1 -\dfrac{1}{(\gamma'+\alpha)}  \prod_{s=1}^d   \dfrac{\sin(\pi w_{s}^{1/d}(\gamma'+1)^{1/d})}{\sin(\pi w_{s}^{1/d})}  + \dfrac{1-\alpha\delta_{w_{1},...,w_{d}}}{(\gamma'+\alpha)}.
\end{equation*}

From Fig. \ref{fig_RGG1}(a), we can notice that the eigenvalues of the \gls{DGG} show a symmetry and the smallest ones are reached for small values of $w$. Additionally, for small and decreasing values of $w$, the eigenvalues of the regularized normalized Laplacian of \glspl{DGG} decrease. Therefore, in the following, we show that the empirical distribution of the eigenvalues in a neighborhood of $\lambda_{1}$, or equivalently, the eigenvalues for small values of $w$ of the regularized normalized Laplacian of \glspl{DGG} follow a power-law asymptotics. 

The eigenvalues of the regularized normalized Laplacian of \glspl{RGG} in a neighborhood of $\lambda_{1}$ are then approximated  by

\begin{equation*}
\lambda(w) \approx1 -\dfrac{1}{(\gamma'+\alpha)} \left[  \dfrac{\sin(\pi w^{1/d}(\gamma'+1)^{1/d})}{\sin(\pi w^{1/d})}\right]^d  + \dfrac{1-\alpha\delta_{w}}{(\gamma'+\alpha)},
\end{equation*}
for $w \rightarrow 0$.

The limiting distribution $F^{ \mathcal{L'}}(x)= \lim_{n \rightarrow \infty}F^{ \mathcal{L'}}_{n}(x)$ exists and is given by
\begin{equation}
\label{integral}
F^{ \mathcal{L'}}(x)=  \int_{0}^{1} \mathbf{1}_{(-\infty, x]}\left( \lambda(w)\right) dw= \int_{\lambda(w) \leq x} dw,
\end{equation}
where $\mathbf{1}_{S}(t)$ is the characteristic function on a set $S$ defined as

\begin{equation*}
\label{eqq1}
\mathbf{1}_{S}(t)=\left\{
    \begin{array}{ll}
   1  &  \mathrm{if} \ \  t \in S,\\ 
    0 & \mathrm{if} \ \ t \notin S.
    \end{array}
\right. 
\end{equation*}

The quantity $ \int_{\lambda(w)\leq x} dw$ is the measure of the set of all $w$ such that $\lambda(w) \leq x$.   Therefore, to compute (\ref{integral}), we only need to find the location of the points $w$ for which $\lambda(w) \leq x$ by solving $\lambda(w)=x$. However, the expression of $\lambda(w)$ includes the Chebyshev polynomials of the 2nd kind. In general there is no closed-form solution of the equation $\lambda(w)=0$. To find the spectral dimension, only small eigenvalues are of interest. Therefore,  for $\gamma'$ finite, we use Taylor series expansion of degree 2 around zero,  which is given by
 
\begin{equation*}
\ \lambda(w) \approx \dfrac{\pi^2}{6(\gamma'+\alpha)} w_{}^{2/d} (\gamma'+1)^{\frac{d+2}{d}}.
\end{equation*}

Fig. \ref{fig_RGG1}(b) validates this approximation and shows that it provides an accurate approximation.
Hence, to compute equation (\ref{integral}), we only need to find the location of the points $w$ for which $\lambda(w) \leq x$, by solving the new equation

\begin{equation*}
\lambda(w)=x  \Longleftrightarrow  \dfrac{\pi^2}{6(\gamma'+\alpha)} w_{}^{2/d} (\gamma'+1)^{\frac{d+2}{d}} =x.
\end{equation*}

By solving with respect to $x$, we obtain
\begin{equation*}
w=  \dfrac{6^{d/2} (\gamma'+\alpha)^{\frac{d}{2}}}{\pi^d(1+\gamma')^{(2+d)/2}}x^{d/2}.
\end{equation*}

Thus,  the limiting eigenvalue distribution $F^{ \mathcal{L'}}(x)$ for small $x$ is approximated by
\begin{equation}
\label{empirical}
F^{ \mathcal{L'}}(x) \approx \dfrac{6^{d/2} (1+\gamma'+1)^{-\frac{2+d}{2}}}{\pi^d}x^{d/2}.
\end{equation}

From (\ref{empirical}), it is apparent that the empirical distribution of the eigenvalues, $F^{ \mathcal{L'}}(x)$  of the regularized normalized Laplacian of a \gls{DGG} in a neighborhood of $\lambda_{1}$ follows a power-law tail asymptotics. 

Therefore, combining (\ref{spectral-dimension-eigenvalue}) and (\ref{empirical}), we get the spectral dimension $d_{s}$ of \glspl{RGG} in the thermodynamic regime as

\begin{align*}
\label{spectraldimension1}
d_{s} & \approx \lim_{x \rightarrow 0} \dfrac{2 \log\left(F^{ \mathcal{L'}}(x) \right)}{\log\left(x\right)}\\
&= \lim_{x \rightarrow 0}\dfrac{2 \log\left(  \dfrac{6^{d/2} (\gamma'+\alpha)^{\frac{d}{2}}}{\pi^d(1+\gamma')^{(2+d)/2}}x^{d/2} \right)}{\log\left( x \right)}\\
&=d.
\end{align*}

This result generalizes the existing work on the standard lattice in which it has already been shown that its spectral dimension, $d_{s},$ and its Euclidean dimension coincide.  In this work, we show that the spectral dimension in \glspl{DGG} with nodes in a grid is equal to the space dimension $d$ and is an approximation for the spectral dimension of the \gls{RGG} in the thermodynamic regime.  Thus, by taking a vertex degree in the \gls{DGG} corresponding to the standard lattice, we retrieve the result for the lattice.

\begin{figure*}[!t]
\centering
\subfloat[Eigenvalues of the \gls{DGG} for $\gamma=8$ (blue line) and $\gamma=28$ (red line), $d=1$.]{{\includegraphics[width=2in]{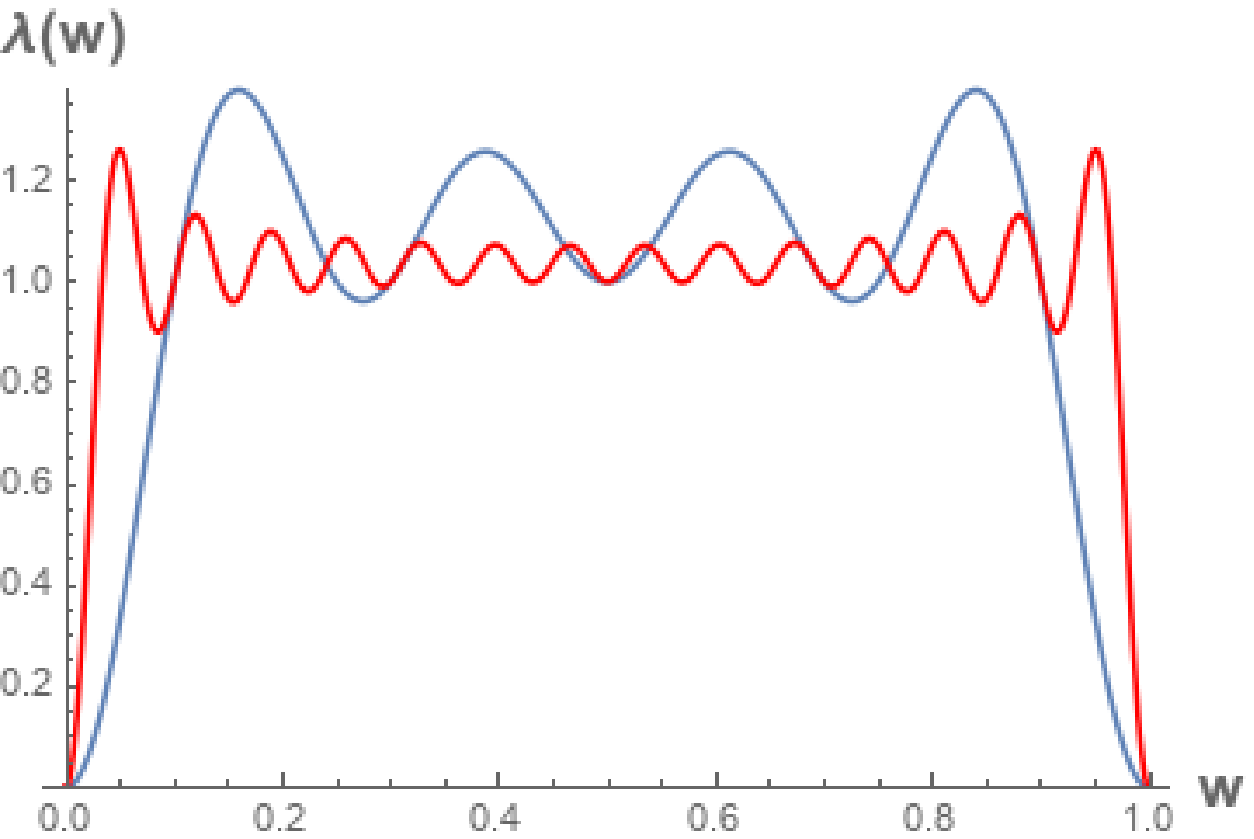}}}%
 \qquad
\subfloat[Comparison between the Analytical eigenvalues (blue) and its Taylor series approximation of degree 2 around zero (dashed orange) for $\gamma=12$, $\alpha=0.1$, $d=2$.]{{\includegraphics[width=2in]{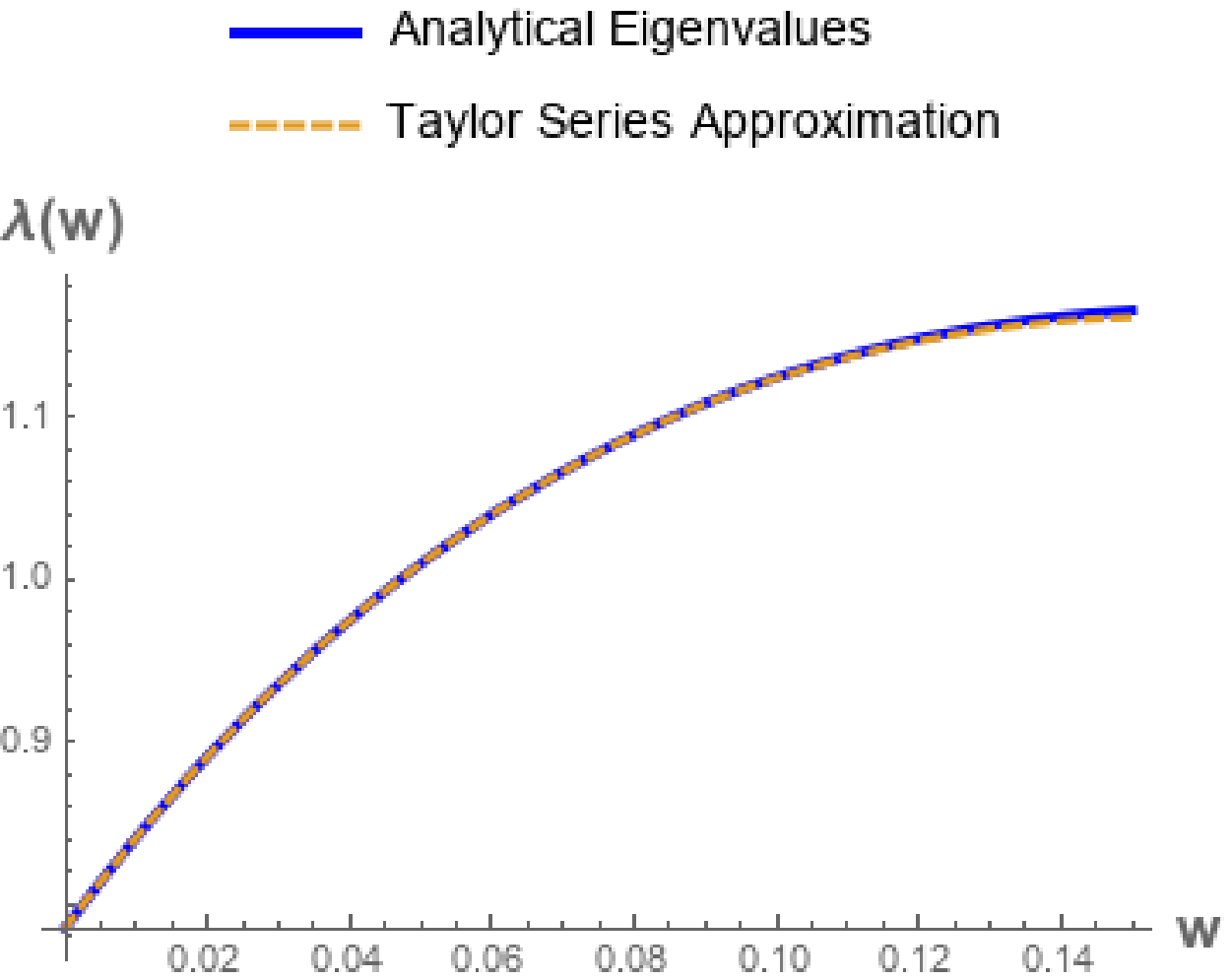}}}%

\caption{Eigenvalues of the \gls{DGG}.}
\label{fig_RGG1}
\end{figure*}

\section{Conclusions}
\label{sec:conclusion}
This work investigates the spectral dimension of \glspl{RGG} in thermodynamic regime. The notion of spectral dimension could serve as an estimator of the intrinsic dimension of the underlying geometric space in real problems where the geographical distance between nodes is a critical factor. We first show that the \gls{test} of the regularized normalized Laplacian of \glspl{RGG} can be approximated by the \gls{test} of the regularized normalized Laplacian of \glspl{DGG} as $n$ goes to infinity.  An analytical approximation of the eigenvalues of an \gls{RGG} regularized normalized Laplacian in a neighborhood of $\lambda_{1}$ is given. Then, using Taylor series expansion around zero, we approximate the empirical distribution of low-eigenvalues which is useful for the derivation of the spectral dimension in thermodynamic regime. The study shows that the spectral dimension, $d_{s}$ for \glspl{RGG} is approximated by $d$ in the thermodynamic regime.  As future works, we will analyze the spectral dimension and the eigenvalues of an \gls{RGG} in the connectivity regime.  In addition, we note that the result we derive in this paper on the spectral dimension for \glspl{RGG} may be useful in estimating the intrinsic dimension, a technique that might be used  to cope with high dimensionality data of networks modeled as \glspl{RGG}.

\section{\bf  Acknowledgement}
This research was funded by the French Government through the Investments for
the Future Program with Reference: Labex UCN@Sophia-UDCBWN.

\bibliographystyle{IEEEtran}
\bibliography{references}
\end{document}